\newtheorem{theorem}{Theorem}[section]
\newtheorem{proposition}[theorem]{Proposition}
\newtheorem{lemma}[theorem]{Lemma}
\theoremstyle{remark}
\numberwithin{equation}{section}
\begin{document}

\title[Automorphisms of the generalized quot schemes]{Automorphisms of the generalized
quot schemes}

\author[I. Biswas]{Indranil Biswas}

\address{School of Mathematics, Tata Institute of Fundamental Research,
Homi Bhabha Road, Bombay 400005, India}

\email{indranil@math.tifr.res.in}

\author[S. Mehrotra]{Sukhendu Mehrotra}

\address{Facultad de Matem\'aticas, PUC Chile, Av. Vicu\~na Mackenna 4860, Santiago, Chile;
Chennai Mathematical Institute, H1, SIPCOT IT Park, Siruseri, Kelambakkam 603103, India}

\email{smehrotra@mat.uc.cl}

\subjclass[2010]{14H60, 14D21, 14D23}

\keywords{Generalized quot scheme, vector fields, automorphism group, symmetric product.}

\date{}

\begin{abstract}
Given a compact connected Riemann surface $X$ of genus $g\, \geq\, 2$, and integers 
$r\,\geq \, 2$, $d_p\, >\, 0$ and $d_z\, >\, 0$, in \cite{BDHW}, a generalized quot 
scheme ${\mathcal Q}_X(r,d_p,d_z)$ was introduced. Our aim here is to compute the 
holomorphic automorphism group of ${\mathcal Q}_X(r,d_p,d_z)$. It is shown that the 
connected component of $\text{Aut}( {\mathcal Q}_X(r,d_p,d_z))$ containing the 
identity automorphism is $\text{PGL}(r,{\mathbb C})$. As an application of it, we 
prove that if the generalized quot schemes of two Riemann surfaces are 
holomorphically isomorphic, then the two Riemann surfaces themselves are isomorphic.
\end{abstract}

\maketitle

\section{Introduction}\label{sec1}

In \cite{BDHW}, a generalized quot scheme was defined; we quickly recall it. Let $X$ 
be a compact connected Riemann surface of genus $g\, \geq\, 2$, and $r\,\geq \, 2$, 
$d_p\, >\, 0$ and $d_z\, >\, 0$ are integers. Let ${\mathcal Q}({\mathcal O}^{\oplus 
r}_X, d_p)$ be the quot scheme that parametrizes the coherent subsheaves of ${\mathcal 
O}^{\oplus r}_X$ of rank $r$ and degree $-d_p$. This complex projective variety
${\mathcal Q}({\mathcal O}^{\oplus r}_X, d_p)$ is a moduli space of
vortices \cite{BDW}, \cite{Br}, \cite{BR}, \cite{Ba}, \cite{EINOS}.
This moduli space is extensively studied (cf. \cite{Bifet}, \cite{BGL}).
The universal vector bundle over
$X\times {\mathcal Q}({\mathcal O}^{\oplus r}_X, d_p)$ will be denoted by
$\mathcal S$. The generalized quot scheme ${\mathcal Q}_X(r,d_p,d_z)$
parametrizes torsionfree coherent sheaves $F$ on $X$ of rank $r$ and degree $d_z-d_p$
such that some member of the family $\mathcal S$ is a subsheaf of $F$. In \cite{BDHW},
the fundamental group and the cohomology of ${\mathcal Q}_X(r,d_p,d_z)$ were computed.

The natural action of $\text{GL}(r,{\mathbb C})$ on the trivial vector bundle 
${\mathcal O}^{\oplus r}_X$ produces a holomorphic action of $\text{PGL}(r,{\mathbb 
C})$ on ${\mathcal Q}_X(r,d_p,d_z)$. The main result proved here says that
$\text{PGL}(r,{\mathbb C})$ is the connected component, containing the identity
element, of the group of holomorphic automorphisms of ${\mathcal Q}_X(r,d_p,d_z)$;
see Theorem \ref{thm1}.

Let $X'$ be a compact connected Riemann surface of genus at least two.
Fix positive integers $r'\,\geq \, 2$, $d'_p$ and $d'_z$. Let
${\mathcal Q}'_X(r',d'_p.d'_z)$
be the corresponding generalized quot scheme. As an application of Theorem
\ref{thm1}, we prove the following (see Proposition \ref{prop2}):

\begin{proposition}
If the two varieties ${\mathcal Q}'_X(r',d'_p.d'_z)$ and ${\mathcal Q}_X(r,d_p,d_z)$
are isomorphic, then $X$ is isomorphic to $X'$.
\end{proposition}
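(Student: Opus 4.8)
The plan is to recognize the two Riemann surfaces inside the automorphism groups and inside a canonical quotient of the quot schemes, using Theorem \ref{thm1} as the source of rigidity. Suppose $\Phi\colon {\mathcal Q}_X(r,d_p,d_z)\,\longrightarrow\,{\mathcal Q}'_{X'}(r',d'_p,d'_z)$ is a holomorphic isomorphism. Conjugation by $\Phi$ gives an isomorphism of the full holomorphic automorphism groups carrying the identity component to the identity component. By Theorem \ref{thm1} this yields an isomorphism $\mathrm{PGL}(r,{\mathbb C})\,\cong\,\mathrm{PGL}(r',{\mathbb C})$ of complex Lie groups; comparing dimensions, $r^2-1\,=\,(r')^2-1$, so $r\,=\,r'$. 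Moreover $\Phi$ intertwines the two $\mathrm{PGL}(r,{\mathbb C})$--actions, possibly up to an automorphism of $\mathrm{PGL}(r,{\mathbb C})$, and hence it sends orbits to orbits and orbit closures to orbit closures.

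Next I would produce a canonical morphism to a symmetric product. Sending a point of ${\mathcal Q}_X(r,d_p,d_z)$, represented by an inclusion $S\hookrightarrow F$ with $S$ a member of the family $\mathcal S$, to the support with multiplicities of the torsion quotient $F/S$, which has length $d_z$, defines a morphism
\[
\pi\colon {\mathcal Q}_X(r,d_p,d_z)\,\longrightarrow\,\mathrm{Sym}^{d_z}(X).
\]
Because $\mathrm{PGL}(r,{\mathbb C})$ acts on the sheaves without moving the points of $X$, the map $\pi$ is $\mathrm{PGL}(r,{\mathbb C})$--invariant. The point to establish is that $\pi$ is \emph{intrinsic}, i.e.\ determined by the abstract variety together with its distinguished subgroup $\mathrm{Aut}^0\,=\,\mathrm{PGL}(r,{\mathbb C})$. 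The mechanism I would use is the Albanese map, which is a biregular invariant and on which the connected semisimple group $\mathrm{PGL}(r,{\mathbb C})$ necessarily acts trivially, so it too is $\mathrm{PGL}(r,{\mathbb C})$--invariant; identifying $\mathrm{Sym}^{d_z}(X)$ with the relevant canonical quotient and factoring the Albanese map through $\pi$ would show that $\Phi$ descends to an isomorphism $\mathrm{Sym}^{d_z}(X)\,\cong\,\mathrm{Sym}^{d_z}(X')$. In particular, comparing dimensions of the symmetric products forces $d_z\,=\,d'_z$.

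Finally I would invoke the Torelli property of symmetric products of curves. For a smooth projective curve of genus $\geq 2$ the Albanese variety of $\mathrm{Sym}^{d_z}(X)$ is the Jacobian $\mathrm{Jac}(X)$, and the Abel--Jacobi image recovers the canonical principal polarization; hence the classical Torelli theorem shows that an isomorphism $\mathrm{Sym}^{d_z}(X)\,\cong\,\mathrm{Sym}^{d_z}(X')$ forces $X\,\cong\,X'$, which completes the argument.

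I expect the main obstacle to be the middle step, namely showing that $\pi$ is governed by ${\mathcal Q}_X(r,d_p,d_z)$ together with $\mathrm{Aut}^0$ alone, so that $\Phi$ genuinely descends to the symmetric product. This requires analysing the $\mathrm{PGL}(r,{\mathbb C})$--orbit structure — describing the generic fibres of $\pi$ and ruling out that $\Phi$ mixes the directions along $X$ with the remaining moduli directions. The rigidity furnished by Theorem \ref{thm1}, namely that $\mathrm{Aut}^0$ is \emph{exactly} $\mathrm{PGL}(r,{\mathbb C})$ with no further holomorphic vector fields, is precisely what should make this control possible.
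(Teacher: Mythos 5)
You have put your finger on the right obstacle, but the proposal does not overcome it, and the mechanism you suggest cannot do so. The Albanese map of ${\mathcal Q}$ is the composition of $p$ in \eqref{e-2} with the Abel--Jacobi map, so $\mathrm{Alb}({\mathcal Q})$ is (a torsor under) $\mathrm{Jac}(X)\times \mathrm{Jac}(X)$, via $\mathrm{Pic}^{d_p}(X)\times\mathrm{Pic}^{d_z}(X)$; what is intrinsic is this abelian variety and the Albanese morphism, not the symmetric product. An isomorphism $\Phi$ induces \emph{some} isomorphism $\mathrm{Alb}({\mathcal Q})\,\to\,\mathrm{Alb}({\mathcal Q}')$, which a priori need not respect the product decomposition, so you cannot even canonically separate the ``$d_p$--directions'' from the ``$d_z$--directions'', let alone factor through your $\pi$. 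Nor does $\mathrm{PGL}(r,{\mathbb C})$--invariance make $\pi$ intrinsic: the generic fibre of $p$ is $({\mathbb C}{\mathbb P}^{r-1})^{d_p+d_z}$ with the \emph{diagonal} $\mathrm{PGL}(r,{\mathbb C})$--action, which is very far from transitive, so the fibres of $\pi$ are not orbit closures of $\mathrm{Aut}^0$ and there is no quotient description of $\mathrm{Sym}^{d_z}(X)$ available. Thus the claimed descent of $\Phi$ to $\mathrm{Sym}^{d_z}(X)\,\cong\,\mathrm{Sym}^{d_z}(X')$ is a genuine gap, not a technical point. Moreover, even granting descent, your final Torelli step is unjustified, and in fact false as stated: for $d_z\,\geq\, g$ the Abel--Jacobi map $\mathrm{Sym}^{d_z}(X)\to\mathrm{Pic}^{d_z}(X)$ is surjective, so its image does not hand you a theta divisor; and for $g\,=\,d_z\,=\,2$ (allowed here) the variety $\mathrm{Sym}^2(X)$ is the blow-up of $\mathrm{Pic}^2(X)$ at the canonical point, hence depends only on the \emph{unpolarized} Jacobian, and there exist non-isomorphic genus-two curves whose Jacobians are isomorphic as unpolarized abelian surfaces. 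So an isomorphism of symmetric products does not in general force an isomorphism of curves.

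The paper's proof of Proposition \ref{prop2} sidesteps descent entirely, and uses the Albanese map only for the weak conclusion $\mathrm{genus}(X)\,=\,\mathrm{genus}(X')$. The intrinsic structure it exploits instead --- and which you should substitute for your unproven middle step --- is the fixed-point locus of a maximal torus $T\,\subset\,\mathrm{Aut}^0({\mathcal Q})\,=\,\mathrm{PGL}(r,{\mathbb C})$ (Theorem \ref{thm1} enters exactly here). The components of ${\mathcal Q}^T$ are products $(\mathrm{Sym}^{a_1}(X)\times\cdots\times\mathrm{Sym}^{a_r}(X))\times(\mathrm{Sym}^{b_1}(X)\times\cdots\times\mathrm{Sym}^{b_r}(X))$ with $\sum a_i\,=\,d_p$, $\sum b_i\,=\,d_z$, and one may choose a component $Z$ having a factor $\mathrm{Sym}^{1}(X)\,=\,X$. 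Since $\Phi$ conjugates $T$ to a maximal torus $T'$, it matches components of ${\mathcal Q}^T$ with components of $({\mathcal Q}')^{T'}$, which are products of symmetric powers of $X'$. Proposition \ref{prop1} --- proved via Macdonald's computations, namely $h^0(\Omega^1_{\mathrm{Sym}^d(Y)})\,=\,\mathrm{genus}(Y)$ together with the nonvanishing of the wedge square of $1$-forms when $d\,\geq\,2$ --- shows that a product of symmetric powers with all exponents at least $2$ admits no nonconstant map to a curve of genus $\geq 2$, so the component matched with $Z$ must itself have an exponent equal to $1$, and projecting to that factor produces a nonconstant map which Proposition \ref{prop1} then forces to be an isomorphism $X\,\cong\,X'$. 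In short: where you try to transport a fibration, the paper transports a fixed-point locus, which is automatically intrinsic; that is the missing idea.
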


\section{Holomorphic vector fields on ${\mathcal Q}_X(r,d_p,d_z)$}

Let $X$ be a compact connected Riemann surface of genus $g$, with $g\, \geq\, 2$.
Fix positive integers $r\,\geq \, 2$, $d_p$ and $d_z$. Let ${\mathcal Q}({\mathcal O
}^{\oplus r}_X, d_p)$ denote the quot scheme that parametrizes all the torsion quotients
of ${\mathcal O}^{\oplus r}_X$
of degree $d_p$. Therefore, elements of ${\mathcal Q}({\mathcal O}^{\oplus r}_X, d_p)$
represent subsheaves $S$ of ${\mathcal O}^{\oplus r}_X$ such that ${\rm rank} (S)\,=\,r$
and ${\rm degree}(S)\,=\, -d_p$. These two conditions on the subsheaf $S$ are together
equivalent to the condition that ${\mathcal O}^{\oplus r}_X/S$ is torsion of degree
$d_p$. There is a universal short exact sequence of sheaves on $X\times {\mathcal
Q}({\mathcal O}^{\oplus r}_X, d_p)$
\begin{equation}\label{e1}
0\,\longrightarrow\, {\mathcal S} \,\longrightarrow\, p^*_X {\mathcal O}^{\oplus r}_X
\,=\, {\mathcal O}^{\oplus r}_{X\times {\mathcal Q}({\mathcal O}^{\oplus r}_X, d_p)}
\,\longrightarrow\, {\mathcal T}_1\,\longrightarrow\, 0\, ,
\end{equation}
where $p_X\, :\, X\times {\mathcal Q}({\mathcal O}^{\oplus r}_X, d_p)\,\longrightarrow\,
X$ is the natural projection.

Let
\begin{equation}\label{e2}
f\,:\, {\mathcal Q}\,=\, {\mathcal Q}_X(r,d_p,d_z)\,\longrightarrow\, 
{\mathcal Q}({\mathcal O}^{\oplus r}_X, d_p)
\end{equation}
be the relative quot scheme that parametrizes torsion quotients of ${\mathcal S}^*$
of degree $d_z$ (see \eqref{e1}). In other words, if $z$ is the point of ${\mathcal Q}({\mathcal O}^{\oplus
r}_X, d_p)$ representing a subsheaf $S\,\subset\, {\mathcal O}^{\oplus r}_X$, then the
fiber $f^{-1}(z)$ is the the space of all subsheaves of $S^*$ of rank $r$ and degree
$d_p-d_z$. Note that the degree of $S^*$ is $d_p$. Therefore, elements of ${\mathcal Q}$
parametrize diagrams of the form
\begin{equation}\label{e-1}
\begin{matrix}
&& 0 &&&&&&\\
&& \Big\downarrow &&&&&&\\
0& \longrightarrow & S & \longrightarrow & {\mathcal O}^{\oplus r}_X&
\longrightarrow & T_1 & \longrightarrow & 0\\
&& \Big\downarrow &&&&&&\\
&& V &&&&&&\\
&& \Big\downarrow &&&&&&\\
&& T_2 &&&&&&\\
&& \Big\downarrow &&&&&&\\
&& 0 &&&&&&
\end{matrix}
\end{equation}
where $T_1$ and $T_2$ are torsion sheaves of degree $d_p$ and $d_z$ respectively, and
the subsheaf $S\,\subset\, {\mathcal O}^{\oplus r}_X$ corresponds to the image, under
$f$, of the point of $\mathcal Q$ corresponding to $V$.

Let ${\rm Aut}({\mathcal Q})$ denote the group of all holomorphic automorphisms of
$\mathcal Q$. Since $\mathcal Q$ is a smooth complex projective variety, its holomorphic
automorphisms are automatically algebraic. Thus ${\rm Aut}({\mathcal Q})$ is a complex
Lie group with Lie algebra $H^0({\mathcal Q},\, T{\mathcal Q})$, where
$T{\mathcal Q}$ is the holomorphic tangent bundle of $\mathcal Q$; the Lie algebra
structure is given by the Lie bracket operation of vector fields. Let
$$
{\rm Aut}^0({\mathcal Q})\, \subset\, {\rm Aut}({\mathcal Q})
$$
be the connected component containing the identity element.

The standard action of $\text{GL}(r, {\mathbb C})$ on ${\mathbb C}^r$ produces
an action of $\text{GL}(r, {\mathbb C})$ on ${\mathcal O}^{\oplus r}_X$, because
the total space of ${\mathcal O}^{\oplus r}_X$ is identified with $X\times
{\mathbb C}^r$. This
action of $\text{GL}(r, {\mathbb C})$ on ${\mathcal O}^{\oplus r}_X$ defines an
action of $\text{GL}(r, {\mathbb C})$ on ${\mathcal Q}({\mathcal O}^{\oplus r}_X, d_p)$.
This action of $\text{GL}(r, {\mathbb C})$ on ${\mathcal Q}({\mathcal O}^{\oplus r}_X, d_p)$
evidently lifts to an action of $\text{GL}(r, {\mathbb C})$ on ${\mathcal Q}$ (see \eqref{e2}). Indeed, $\text{GL}(r, {\mathbb C})$ acts on diagrams of type \eqref{e-1}.
Since $\text{GL}(r, {\mathbb C})$ is connected, we get a homomorphism
$$
\text{GL}(r, {\mathbb C})\,\longrightarrow\, {\rm Aut}^0({\mathcal Q})\, .
$$
The center ${\mathbb C}^\star \,=\, {\mathbb C}\setminus \{0\}$ of $\text{GL}(r,{\mathbb
C})$ acts trivially on $\mathcal Q$. Hence the above homomorphism produces a homomorphism
\begin{equation}\label{e3}
\varphi\, :\, \text{PGL}(r, {\mathbb C})\,\longrightarrow\, {\rm Aut}^0({\mathcal Q})\, .
\end{equation}

\begin{theorem}\label{thm1}
The homomorphism $\varphi$ in \eqref{e3} is an isomorphism.
\end{theorem}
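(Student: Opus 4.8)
The plan is to compute the Lie algebra $H^0(\mathcal{Q},\, T\mathcal{Q})$ of $\mathrm{Aut}^0(\mathcal{Q})$ and identify it, via $d\varphi$, with $\mathfrak{pgl}(r,{\mathbb C})$, and then upgrade this to the group statement. First I would record that $\varphi$ is injective, i.e.\ that the $\mathrm{PGL}(r,{\mathbb C})$-action is faithful: a nontrivial class $g\,\in\,\mathrm{PGL}(r,{\mathbb C})$ already acts nontrivially on the length-one torsion quotients ${\mathcal O}^{\oplus r}_X\,\longrightarrow\,{\mathbb C}_x$, whose kernels sweep out ${\mathbb P}(({\mathbb C}^r)^*)$, so $g$ moves points of ${\mathcal Q}({\mathcal O}^{\oplus r}_X, d_p)$ and hence of $\mathcal{Q}$. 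Faithfulness gives that $d\varphi\,:\,\mathfrak{pgl}(r,{\mathbb C})\,\longrightarrow\, H^0(\mathcal{Q},\, T\mathcal{Q})$ is injective, because a class whose fundamental vector field vanishes generates a trivial flow. It then suffices to prove that $d\varphi$ is surjective: granting this, $\varphi$ is a local isomorphism of complex Lie groups, hence open, so its image is an open (therefore closed) subgroup of the connected group $\mathrm{Aut}^0(\mathcal{Q})$ and thus all of it, while injectivity forces the kernel to be trivial; so $\varphi$ is an isomorphism.

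For surjectivity I would use the fibration $f\,:\,\mathcal{Q}\,\longrightarrow\, B\,:=\,{\mathcal Q}({\mathcal O}^{\oplus r}_X, d_p)$ of \eqref{e2}. Writing $T_f$ for the relative tangent bundle, the relative tangent sequence $0\,\longrightarrow\, T_f\,\longrightarrow\, T\mathcal{Q}\,\longrightarrow\, f^*TB\,\longrightarrow\, 0$ gives, upon taking sections and using $f_*{\mathcal O}_{\mathcal Q}\,=\,{\mathcal O}_B$ (the fibres of $f$ are connected Quot schemes of torsion quotients, hence integral), an exact sequence $0\,\longrightarrow\, H^0(\mathcal{Q},\, T_f)\,\longrightarrow\, H^0(\mathcal{Q},\, T\mathcal{Q})\,\longrightarrow\, H^0(B,\, TB)$. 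The argument then rests on two claims: (A) $H^0(\mathcal{Q},\, T_f)\,=\,0$, so that $H^0(\mathcal{Q},\, T\mathcal{Q})$ injects into $H^0(B,\, TB)$; and (B) $H^0(B,\, TB)\,=\,\mathfrak{pgl}(r,{\mathbb C})$. Since $f$ is $\mathrm{PGL}(r,{\mathbb C})$-equivariant, the image $d\varphi(\mathfrak{pgl}(r,{\mathbb C}))$ maps under $f_*$ onto the fundamental vector fields of the faithful action on $B$, which by (B) already fill $H^0(B,\, TB)$; hence (A) and (B) together force $H^0(\mathcal{Q},\, T\mathcal{Q})\,=\,d\varphi(\mathfrak{pgl}(r,{\mathbb C}))$ and make $d\varphi$ surjective.

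Both (A) and (B) I would deduce from one computation of vector fields on a Quot scheme ${\mathcal R}\,=\,\mathrm{Quot}(E,\ell)$ of length-$\ell$ torsion quotients of a rank-$r$ bundle $E$ on $X$, namely $H^0({\mathcal R},\, T{\mathcal R})\,\cong\, H^0(X,\, \mathcal{E}nd_0 E)$, the trace-free endomorphisms. The mechanism is the support morphism $\sigma\,:\,{\mathcal R}\,\longrightarrow\,\mathrm{Sym}^\ell(X)$, which over the locus of reduced divisors $x_1+\dots+x_\ell$ is a product $\prod_i {\mathbb P}(E_{x_i})$ of projective spaces and so contributes a copy of $\mathfrak{pgl}(E_{x_i})$ at each point; as the $x_i$ collide along the diagonals these independent projective-linear directions are forced to synchronize, and together with $H^0(T_X)\,=\,0$ and $H^0(T\,\mathrm{Sym}^\ell X)\,=\,0$ for $g\,\geq\,2$ (so the base contributes nothing) the only global vector fields are those induced by an automorphism of $E$, i.e.\ $H^0(X,\mathcal{E}nd E)/{\mathbb C}\cdot\mathrm{id}\,=\,H^0(X,\mathcal{E}nd_0 E)$; the splitting $\mathcal{E}nd E\,=\,{\mathcal O}_X\oplus\mathcal{E}nd_0 E$ by the trace disposes of the potential $H^1({\mathcal O}_X)$-contribution. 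Applying this with $E\,=\,{\mathcal O}^{\oplus r}_X$ and $\ell\,=\,d_p$ yields (B), since $H^0(\mathcal{E}nd_0{\mathcal O}^{\oplus r}_X)\,=\,\mathfrak{sl}(r,{\mathbb C})\,=\,\mathfrak{pgl}(r,{\mathbb C})$. For (A), the fibre of $f$ over $S\,\subset\,{\mathcal O}^{\oplus r}_X$ is $\mathrm{Quot}(S^*,d_z)$, and for generic $S$ the bundle $S^*$ is simple, so $H^0(\mathcal{E}nd_0 S^*)\,=\,0$ and the generic fibre carries no vector field; a global section of $T_f$ vanishing on a dense family of fibres vanishes identically, giving $H^0(\mathcal{Q},\, T_f)\,=\,0$.

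The main obstacle I anticipate is the synchronization step in the Quot-scheme lemma: controlling holomorphic vector fields across the non-reduced and diagonal strata of $\mathrm{Sym}^\ell(X)$ and proving rigorously that the per-point copies of $\mathfrak{pgl}(E_{x_i})$ cannot vary independently but must assemble into a single global endomorphism of $E$. This calls for a careful local analysis near colliding points, equivalently a study of $\sigma_* T_{{\mathcal R}/\mathrm{Sym}^\ell X}$ along the diagonals, rather than a formal pushforward computation. A secondary and more routine point to settle is the generic simplicity of $S^*$ as $S$ ranges over $B$, which underlies the vanishing used in (A).
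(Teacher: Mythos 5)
Your reduction steps are fine and partly parallel the paper: injectivity of $\varphi$, the passage to the Lie algebra map, and the squeeze of $H^0(\mathcal{Q},\,T\mathcal{Q})$ between vertical fields and fields on $B\,=\,{\mathcal Q}({\mathcal O}^{\oplus r}_X,d_p)$ are all legitimate. But both pillars (A) and (B) have genuine gaps, and (A) rests on a false assertion.

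The fatal problem is your claim that $S^*$ is simple for generic $S$. It is not, in a range of parameters squarely covered by the theorem. If $S\,=\,\ker\bigl({\mathcal O}^{\oplus r}_X\,\longrightarrow\,\oplus_{i=1}^{d_p}{\mathbb C}^r/H_i\bigr)$, then every constant $A\,\in\,\mathfrak{gl}(r,{\mathbb C})$ with $A(H_i)\,\subseteq\,H_i$ for all $i$ preserves $S$, hence gives an endomorphism of $S$ (and of $S^*$). For $d_p$ hyperplanes in general position this stabilizer has dimension at least $r^2-d_p(r-1)$, which is at least $r\,\geq\,2$ whenever $d_p\,\leq\,r$; for $d_p\,=\,1$ it is the full parabolic stabilizer of $H_1$, of dimension $r^2-r+1$, so no $S$ at all is simple. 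Granting your own Quot-scheme lemma, the generic fiber $\mathrm{Quot}(S^*,d_z)$ of $f$ then carries \emph{nonzero} holomorphic vector fields whenever $d_p\,\leq\,r$, and your dense-vanishing argument for $H^0(\mathcal{Q},\,T_f)\,=\,0$ collapses. Statement (A) is still true, but it requires precisely a global non-assembly argument: vertical fields exist fiber by fiber, yet cannot be chosen compatibly over $B$. This is exactly the role of the paper's Lemma~\ref{lem2} on Hecke transformations --- the automorphism $\widehat{A}'$ of $P(V)\vert_{Y\setminus\{y_0\}}$ extends across $y_0$ if and only if $A(S)\,=\,S$ --- applied along the embedding $\delta\,:\,P(V)\,\hookrightarrow\,\mathcal{Q}$, which is how the paper rules out the spurious symmetries that your fiberwise vanishing was supposed to exclude.

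The second gap is the one you flag yourself: the lemma $H^0(\mathrm{Quot}(E,\ell),\,T)\,\cong\,H^0(X,\,\mathcal{E}nd_0\,E)$, needed for (B) and for the fiberwise input to (A), is left unproven, and the ``synchronization'' of the factors $\mathfrak{pgl}(E_{x_i})$ across the diagonal strata is not a technical remainder but the entire difficulty. For $E\,=\,{\mathcal O}^{\oplus r}_X$ you could instead cite the main theorem of \cite{BDH} to get (B); for arbitrary $E\,=\,S^*$ no such reference is available here, and as shown above you would in any case be proving a statement with a nonzero answer, not a vanishing. Note that the paper deliberately sidesteps any such extension problem: its Lemma~\ref{lem1} establishes only an \emph{injection} $H^0(\mathcal{Q},\,T\mathcal{Q})\,\hookrightarrow\,\mathfrak{g}\oplus\mathfrak{g}$, by restricting to the open locus $p^{-1}(U)$ over configurations of distinct points (using from \cite{BDH} only $H^0(\widetilde{U},\,T\widetilde{U})\,=\,0$ and the triviality of functions on $\widetilde{U}$), and then eliminates the extra copy of $\mathfrak{g}$ by the Hecke-extension criterion. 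That upper-bound-plus-obstruction strategy is what makes the paper's proof close; as written, yours does not.
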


\begin{proof}
Let
\begin{equation}\label{e-2}
p\, :\, {\mathcal Q}\,\longrightarrow\, \text{Sym}^{d_p}(X)\times \text{Sym}^{d_z}(X)
\end{equation}
be the morphism that sends any $z\, \in\, \mathcal Q$ to the support of
$T_1$ with multiplicity given by $T_1$ and the support of
$T_2$ with multiplicity given by $T_2$, where $T_1$ and $T_2$ are the torsion sheaves
in the diagram \eqref{e-1} corresponding to the point $z$.

The homomorphism $\varphi$ is injective because the homomorphism
$$\text{PGL}(r, {\mathbb C})\,\longrightarrow\, {\rm Aut}({\mathbb C}{\mathbb P}^{r-1})$$
given by the standard action of $\text{GL}(r, {\mathbb C})$ on ${\mathbb C}^r$ is
injective. Indeed, for any
$$
x\,=\, ((x_1\, ,\cdots\, , x_{d_p})\, , (y_1\, ,\cdots\, , y_{d_z}))
\,\in\, \text{Sym}^{d_p}(X)\times \text{Sym}^{d_z}(X)\, ,
$$
where all the above $d_p+d_z$ points are distinct, the fiber $p^{-1}(x)$ is $({\mathbb C}
{\mathbb P}^{r-1})^{d_p}\times (CP^{r-1})^{d_z}$ (see \eqref{e-2}), and the action of
$\text{PGL}(r, {\mathbb C})$ on $p^{-1}(x)$ coincides with the diagonal action of
$\text{PGL}(r, {\mathbb C})$ on the factors in the above Cartesian product.

We need to prove that $\varphi$ is surjective.

The Lie algebra of $\text{PGL}(r, {\mathbb C})$ will be denoted by $\mathfrak g$;
it is the Lie algebra structure on trace zero $r\times r$--matrices with complex entries
given by commutator. Let
\begin{equation}\label{e4}
\theta\,:\, {\mathfrak g}\, \longrightarrow\, H^0({\mathcal Q},\, T{\mathcal Q})
\end{equation}
be the homomorphism of Lie algebras corresponding to the homomorphism $\varphi$ in
\eqref{e3}. To prove that $\varphi$ is surjective, it suffices to show that $\theta$
is surjective.

The following lemma is a key step in the computation of $H^0({\mathcal Q},\, T{\mathcal Q})$.

\begin{lemma}\label{lem1}
The Lie algebra $H^0({\mathcal Q},\, T{\mathcal Q})$ has a natural injective homomorphism
to ${\mathfrak g}\oplus\mathfrak g$.
\end{lemma}

\begin{proof}[Proof of Lemma \ref{lem1}]
For any positive integer $k$, let $P(k)$ denote the group of all permutations of
$\{1\, ,\cdots\, ,k\}$. Consider the action of $P(d_p)\times P(d_z)$ on
$X^{d_p+d_z}$ that permutes the first $d_p$ factors and the last $d_z$ factors. The
quotient $X^{d_p+d_z}/(P(d_p)\times P(d_z))$ is $\text{Sym}^{d_p}(X)\times
\text{Sym}^{d_z}(X)$. Let
$$
q\, :\, X^{d_p+d_z}\, \longrightarrow\, \text{Sym}^{d_p}(X)\times \text{Sym}^{d_z}(X)
$$
be the corresponding quotient map. 

Let
\begin{equation}\label{tU}
{\widetilde U}\, \subset\, X^{d_p+d_z}
\end{equation}
be the complement of the big diagonal,
so ${\widetilde U}$ parametrizes all possible distinct $d_p+d_z$ ordered points of
$X$. The image $q({\widetilde U})\, \subset\, \text{Sym}^{d_p}(X)\times \text{Sym}^{d_z}(X)$
will be denoted by $U$. Since $p^{-1}(U)$ is a
Zariski dense open subset of $\mathcal Q$, where $p$ is defined in \eqref{e-2}, we have
\begin{equation}\label{e5}
H^0({\mathcal Q},\, T{\mathcal Q})\,\subset\,H^0(p^{-1}(U),\, T(p^{-1}(U)))\, .
\end{equation}

The Galois group $\Gamma\,:=\, P(d_p)\times P(d_z)$ for the \'etale covering
$$q\vert_{\widetilde U}\, :\, \widetilde{U}\, \longrightarrow\, U\, ,$$
where $\widetilde U$ is defined in \eqref{tU}, acts on the fiber
product ${\mathcal Z}\, :=\, p^{-1}(U)\times_U \widetilde U$. We have
\begin{equation}\label{e6}
H^0(p^{-1}(U),\, T{\mathcal Q})\, =\, H^0({\mathcal Z},\, T{\mathcal Z})^\Gamma\, ,
\end{equation}
because the projection ${\mathcal Z}\, \longrightarrow\, p^{-1}(U)$ to the first
factor of the fiber product is an \'etale Galois covering with Galois group
$\Gamma$.

Now we have ${\mathcal Z}\,=\, \widetilde{U}\times ({\mathbb C}{\mathbb P}^{r-1})^{d_p}
\times ({\mathbb C}P^{r-1})^{d_z}$, where ${\mathbb C}P^{r-1}$ is the projective space
parametrizing the lines in ${\mathbb C}^{r}$. Note that
$$
H^0({\mathbb C}{\mathbb P}^{r-1},\, T{\mathbb C}{\mathbb P}^{r-1})\,=\,
{\mathfrak g}\,=\,H^0({\mathbb C}P^{r-1},\, T{\mathbb C}P^{r-1})\, .
$$
It is known that
$H^0(\widetilde{U},\, T\widetilde{U})\,=\, 0$ \cite[p. 1452, Proposition 2.3]{BDH}.
Also, we have $H^0(\widetilde{U},\, {\mathcal O}_{\widetilde U})\,=\, 0$
\cite[p. 1449, Lemma 2.2]{BDH}. These together imply that
$$
H^0({\mathcal Z},\, T{\mathcal Z})\,=\, {\mathfrak g}^{\oplus d_p}\oplus
{\mathfrak g}^{\oplus d_z}\, .
$$
The action of $\Gamma\,=\, P(d_p)\times P(d_z)$ on ${\mathfrak g}^{\oplus d_p}\oplus
{\mathfrak g}^{\oplus d_z}\,=\, H^0({\mathcal Z},\, T{\mathcal Z})$ (see \eqref{e6})
is the one that permutes first $d_p$ factors and the last $d_z$ factors.
Hence we have
$H^0({\mathcal Z},\, T{\mathcal Z})^\Gamma\, =\, {\mathfrak g}\oplus {\mathfrak g}$.
Therefore,
\begin{equation}\label{a}
H^0(p^{-1}(U),\, T(p^{-1}(U)))\,=\, {\mathfrak g}\oplus {\mathfrak g}
\end{equation}
by \eqref{e6}. Now the lemma follow from \eqref{e5}.
\end{proof}

Next we will need a property of the Hecke transformations.

Let $Y$ be a smooth complex algebraic curve and $y_0\,\in\, Y$ a point; the curve
$Y$ need not be projective.
Fix a linear nonzero proper subspace $0\,\not=\, S\, \subsetneq\, {\mathbb C}^r$. Consider
the short exact sequence of sheaves on $Y$
\begin{equation}\label{V}
0\,\longrightarrow\, V \,\longrightarrow\, {\mathcal O}^{\oplus r}_Y \,\longrightarrow\,
{\mathcal O}^{\oplus r}_{y_0}/S\,=\,  {\mathbb C}^r/S \,\longrightarrow\, 0\, .
\end{equation}
Let $P(V)$ denote the projective bundle over $Y$ that parametrizes the lines in the
fibers of $V$. Take any $A\, \in\, \text{GL}(r, {\mathbb C})$. Let ${\widehat A}$
be the automorphism of $P({\mathcal O}^{\oplus r}_Y)\,=\,Y\times CP^{r-1}$ given by
$A$; this automorphism acts trivially on $Y$ and has the standard action on
$CP^{r-1}$. Since $V$ and ${\mathcal O}^{\oplus r}_Y$ are identified over $Y\setminus
\{y_0\}$, the above automorphism $\widehat A$ produces an automorphism of
$P(V)\vert_{Y\setminus\{y_0\}}$. This automorphism of
$P(V)\vert_{Y\setminus\{y_0\}}$ will be denoted by ${\widehat A}'$.

\begin{lemma}\label{lem2}
The above automorphism ${\widehat A}'$ of $P(V)\vert_{Y\setminus\{y_0\}}$ extends
to a self-map of $P(V)$ if and only if $A(S)\,=\, S$.
\end{lemma}

\begin{proof}[Proof of Lemma \ref{lem2}]
Let $\text{GL}(V)$ be the Zariski open subset of the total space of
$\text{End}(V)\,=\, V\otimes V^*$ parametrizing endomorphisms of fibers
that are automorphisms. The quotient
$\text{PGL}(V)\,=\, \text{GL}(V)/{\mathbb C}^*$ is a group--scheme over $Y$
with fibers isomorphic to the group $\text{PGL}(r, {\mathbb C})$. If an algebraic
map of the total space
$$
\tau\, :\, P(V)\, \longrightarrow\, P(V)
$$
is an automorphism satisfies the condition that there is a nonempty Zariski open subset
$U_\tau\, \subset\, Y$ such that $\tau$ restricts to an automorphism of
$P(V)\vert_{U_\tau}$ over the identity map of $U_\tau$, then $\tau$ is actually an
automorphism over the identity map of $Y$. We note that the group of automorphisms of
$P(V)$ over the identity map of $Y$
is precisely the group of sections, over $Y$, of the group--scheme $\text{PGL}(V)$.

Fix a subspace $S'\,\subset\, {\mathbb C}^r$ complementary to $S$, so
${\mathbb C}^r\,=\, S\oplus S'$. Let $E_S\,:=\, Y\times S$ and
$E_{S'}\,:=\, Y\times S'$ be the trivial algebraic vector bundles over $Y$ with fibers
$S$ and $S'$ respectively. Then we have
\begin{equation}\label{e7}
{\mathcal O}^{\oplus r}_Y\,=\, E_S\oplus E_{S'}\ \ \text{ and }
V\,=\, E_S\oplus (E_{S'}\otimes {\mathcal O}_Y(-y_0))\, .
\end{equation}
{}From the above decompositions it follows immediately that if 
$A(S)\,=\, S$, then ${\widehat A}'$ extends to an automorphism of $P(V)$.

To prove the converse, assume that ${\widehat A}'$ extends to an
automorphism of $P(V)$. It suffices to show that the subbundle
$E_S\, \subset\, V$ in \eqref{e7} is preserved by the section of $\text{PGL}(V)$
corresponding to the automorphism of $P(V)$. Note that the restriction of this
section of $\text{PGL}(V)$ to $Y\setminus\{y_0\}$ is given by $A$.
There is no nonzero homomorphism from $E_S$ to $E_{S'}\otimes {\mathcal O}_Y(-y_0)$
which is given by a constant homomorphism
$B\, :\, S\, \longrightarrow\, S'$ on $Y\setminus\{y_0\}$ because such a homomorphism
over $Y\setminus\{y_0\}$ extends to a
homomorphism from $(E_S)_{y_0}$ to $(E_{S'})_{y_0}$ and this
homomorphism $(E_S)_{y_0}\,\longrightarrow\, (E_{S'})_{y_0}$ coincides with $B$.
On the other hand, the image of $(E_{S'}\otimes {\mathcal O}_Y(-y_0))_{y_0}$ in
$(E_{S'})_{y_0}$ is the zero subspace. So if
$B\,\not=\, 0$, then the homomorphism over $Y\setminus\{y_0\}$ does not extend
to a homomorphism from $E_S$ to $E_{S'}\otimes {\mathcal O}_Y(-y_0)$ over $Y$.
Therefore, we conclude that $A(S)\,=\, S$. 
\end{proof}

Fix distinct $d_p-1$ points $x_1\, ,\cdots\, , x_{d_p-1}$ on $X$. For each $x_i$,
fix a hyperplane $H_i$ in $({\mathcal O}^{\oplus r}_X)_{x_i}\,=\, {\mathbb C}^r$. Also,
fix distinct $d_z-1$ points $y_1\, ,\cdots\, , y_{d_z-1}$ on $X$ such that $x_i\,
\not=\, y_j$ for all $1\,\leq \, i\, \leq\, d_p-1$ and $1\,\leq \, j\, \leq\, d_z-1$.
Fix a line $L_j$ in $({\mathcal O}^{\oplus r}_X)_{y_j}\,=\, {\mathbb C}^r$ for each $j$.

Now take the complement $Y\,=\, X\setminus\{x_1\, ,\cdots\, , x_{d_p-1}\, , y_1\, 
,\cdots\, , y_{d_z-1}\}$. Take any nontrivial element
\begin{equation}\label{A}
{\rm Id}\,\not=\, A\,\in\, {\rm PGL}(r, {\mathbb C})\, .
\end{equation}
Fix a point $x_0\,\in\, Y$ and also fix a hyperplane 
$$S\,\subset\, ({\mathcal O}^{\oplus r}_X)_{x_0}\,=\, {\mathbb C}^r$$ such that
\begin{equation}\label{e8}
A(S)\,\not=\, S\, ;
\end{equation}
since $A\,\not=\, {\rm Id}$, such a subspace exists. Consider
the vector bundle $V$ on $Y$ constructed in \eqref{V} using $S$. As before, 
$P(V)$ denotes the projective bundle over $Y$ parametrizing the lines in the
fibers of $V$.

There is an embedding
\begin{equation}\label{de}
\delta\, :\, P(V)\, \longrightarrow\, {\mathcal Q}
\end{equation}
which we will now describe.
For the map $f$ in \eqref{e2}, the image $f\circ \delta (P(V))$ is the point
given by the quotient
$$
0\,\longrightarrow\, \widehat{V} \, \longrightarrow\, {\mathcal O}^{\oplus r}_X\, \longrightarrow\,
(\oplus_{i=1}^{d_p-1} ({\mathcal O}^{\oplus r}_X)_{x_i}/H_i)\oplus ({\mathcal O}^{\oplus
r}_X)_{x_0}/S)\, \longrightarrow\, 0\, ,
$$
where $H_i$ are the hyperplanes fixed above; in particular, $f\circ \delta$ is a constant
map. Note that $\widehat{V}$ is an extension of the vector bundle $V$ to $X$.
For any point $y\, \in\, Y$ and any point in the fiber $y'\, \in  P(V)_y$,
consider the short exact sequence on $X$
$$
0\,\longrightarrow\, E \, \longrightarrow\, \widehat{V}^*\, \longrightarrow\,
(\oplus_{j=1}^{d_z-1} (\widehat{V}^*_{y_j}/L^\perp_j)\oplus (\widehat{V}^*_y/L(y')^\perp)\,
\longrightarrow\, 0\, ,
$$
where $L(y')\, \subset\, V_y$ is the line in $V_y$ corresponding to the above point $y'$,
and $L^\perp_j\, \subset\, \widehat{V}^*_{y_j}$ (respectively,
$L(y')^\perp\,\subset\, \widehat{V}^*_y$) is the annihilator of $L_j$ (respectively,
$L(y')$); note that $\widehat{V}_{y_j}$ is identified with ${\mathbb C}^r$ and
$\widehat{V}_y$ is identified with $V_y$. Therefore, we have
$$
\widehat{V}\, \hookrightarrow\, E^*\, .
$$
The map $\delta$ in \eqref{de} sends any $y'$ to the above extension $E^*$ of $\widehat V$
constructed from $y'$.

{}From \eqref{a} we know that $\text{PGL}(r, {\mathbb C})$ is contained in 
$\text{Aut}(p^{-1}(U))$ with $$0\oplus {\mathfrak g}\, \subset\, {\mathfrak g}\oplus 
{\mathfrak g}\,=\, H^0(p^{-1}(U),\, T(p^{-1}(U)))$$ as its Lie algebra.
This action of $\text{PGL}(r, {\mathbb C})$ on $p^{-1}(U)$ clearly preserves the
intersection $\delta(P(V))\cap p^{-1}(U)$. Therefore, if the action of the element
$A$ in \eqref{A} extends to $\mathcal Q$, then the extended action
must preserve the image $\delta(P(V))$.

On the other hand, from Lemma \ref{lem2} we know that the action of $A$ on
$P(V)\vert_{Y\setminus \{y_0\}}$ does not extend to $P(V)$ because
\eqref{e8} holds. This completes the proof of the theorem.
\end{proof}

\section{Holomorphic maps from a symmetric product}

\begin{proposition}\label{prop1}
Let $X$ and $Y$ be compact connected Riemann surface with
$${\rm genus}(X)\, \geq\, {\rm genus}(Y)\, \geq\,2\, .$$ If there is a nonconstant
holomorphic map $\beta\, :\, {\rm Sym}^d(Y)\,\longrightarrow\, X$, then $d\,=\,1$, and
$\beta$ is an isomorphism.
\end{proposition}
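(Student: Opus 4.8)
The plan is to reduce everything to the structure of the Abel--Jacobi map. First I would record the two standard facts that $\mathrm{Sym}^d(Y)$ is a smooth connected projective variety whose Albanese variety is the Jacobian $J(Y)$, the Albanese morphism being the Abel--Jacobi map $u\colon \mathrm{Sym}^d(Y)\to J(Y)$, and that consequently $H^0(\mathrm{Sym}^d(Y),\,\Omega^1)\,=\,u^*H^0(J(Y),\,\Omega^1)\,\cong\, H^0(Y,\,\Omega^1)$, a space of dimension ${\rm genus}(Y)$. Since $\beta$ is nonconstant and $X$ is a curve, $\beta$ is surjective; hence for any nonzero $\omega\in H^0(X,\,\Omega^1)$ the pullback $\beta^*\omega$ is nonzero, so $\beta^*\colon H^0(X,\,\Omega^1)\to H^0(\mathrm{Sym}^d(Y),\,\Omega^1)$ is injective. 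This already gives ${\rm genus}(X)\le{\rm genus}(Y)$, and combined with the hypothesis ${\rm genus}(X)\ge{\rm genus}(Y)$ it forces ${\rm genus}(X)={\rm genus}(Y)$ and makes $\beta^*$ an isomorphism on holomorphic $1$-forms.

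Next I would feed this back through the Albanese. Because ${\rm genus}(X)\ge 1$, the Abel--Jacobi map $a_X\colon X\hookrightarrow J(X)$ is an embedding, so by the universal property of the Albanese the composite $a_X\circ\beta$ factors as $a_X\circ\beta\,=\,\psi\circ u$ for a homomorphism (up to translation) $\psi\colon J(Y)\to J(X)$ of abelian varieties. Pulling back $1$-forms and using that $a_X^*$ and $u^*$ are isomorphisms, the identity $\beta^*\circ a_X^*=u^*\circ\psi^*$ shows that $\psi^*$ is an isomorphism on translation-invariant $1$-forms, i.e. the differential of $\psi$ at the origin is an isomorphism; as $\dim J(Y)=\dim J(X)$, the map $\psi$ is an isogeny. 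Now I would compare images: the left-hand side has image $a_X(\beta(\mathrm{Sym}^d(Y)))=a_X(X)$, a curve, while the right-hand side has image $\psi(W_d)$, where $W_d:=u(\mathrm{Sym}^d(Y))$ is the image of the Abel--Jacobi map, of dimension $\min(d,\,{\rm genus}(Y))$. Since $\psi$ has finite fibres, $\dim\psi(W_d)=\dim W_d$, whence $\min(d,\,{\rm genus}(Y))=\dim a_X(X)=1$. As ${\rm genus}(Y)\ge 2$, this forces $d=1$. I expect this to be the crux of the argument---converting the equality of genera into an isogeny and then reading off $\dim W_d$ from the image; the one point requiring care is the standard computation $\dim W_d=\min(d,\,{\rm genus}(Y))$ together with the observation that surjectivity of $\beta$ carries all of $\mathrm{Sym}^d(Y)$ onto $X$.

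Finally, with $d=1$ one has $\mathrm{Sym}^1(Y)=Y$ and a nonconstant map $\beta\colon Y\to X$ between curves with ${\rm genus}(X)={\rm genus}(Y)\ge 2$. The Riemann--Hurwitz formula gives $2\,{\rm genus}(Y)-2=\deg(\beta)\,(2\,{\rm genus}(X)-2)+R$ with $R\ge 0$ the ramification contribution; since $2\,{\rm genus}(X)-2=2\,{\rm genus}(Y)-2>0$, this yields $\deg(\beta)\le 1$, hence $\deg(\beta)=1$ and $R=0$. Thus $\beta$ is an unramified degree-one morphism, that is, an isomorphism, which completes the argument.
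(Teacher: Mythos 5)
Your argument is correct, and it agrees with the paper at the beginning and the end: both proofs start by noting that surjectivity of $\beta$ makes $\beta^*$ injective on holomorphic $1$-forms, which together with $\dim H^0({\rm Sym}^d(Y),\,\Omega^1_{{\rm Sym}^d(Y)})\,=\,{\rm genus}(Y)$ forces ${\rm genus}(X)\,=\,{\rm genus}(Y)$ and makes $\beta^*$ an isomorphism, and both finish with the same Riemann--Hurwitz computation showing ${\rm degree}(\beta)\,=\,1$. The middle step --- ruling out $d\,\geq\,2$ --- is where you genuinely diverge. The paper uses an exterior-algebra obstruction: by Macdonald's result the wedge product $\wedge^2 H^0({\rm Sym}^d(Y),\,\Omega^1)\,\longrightarrow\, H^0({\rm Sym}^d(Y),\,\Omega^2)$ is nonzero when $d\,\geq\,2$, whereas on the curve $X$ all wedge products of $1$-forms vanish since $H^0(X,\,\Omega^2_X)\,=\,0$, so the isomorphism $\beta^*$ cannot be compatible with wedge products --- a two-line contradiction once Macdonald's nonvanishing statement is quoted. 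You instead factor $a_X\circ\beta\,=\,\psi\circ u$ through the Albanese, upgrade the isomorphism on $1$-forms to the statement that $\psi\,:\,J(Y)\,\longrightarrow\,J(X)$ is an isogeny (up to translation), and compare dimensions of images: $\dim\psi(W_d)\,=\,\dim W_d\,=\,\min(d,\,{\rm genus}(Y))$ must equal $\dim a_X(X)\,=\,1$, which with ${\rm genus}(Y)\,\geq\,2$ forces $d\,=\,1$. Your route trades Macdonald's wedge-product computation for the standard facts that the Abel--Jacobi map is birational onto $W_d$ for $d\,\leq\,{\rm genus}(Y)$ (giving $\dim W_d\,=\,\min(d,\,{\rm genus}(Y))$) and that an isogeny has finite fibres; it is more geometric, and as a byproduct it produces an isogeny of Jacobians, though the paper's argument is shorter and would apply verbatim whenever the target merely satisfies $H^0(\Omega^2)\,=\,0$. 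All the individual steps you invoke (the universal property of the Albanese, injectivity of $a_X$ for genus at least one, the dimension of $W_d$) are standard and correctly applied, so this is a valid alternative proof.
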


\begin{proof}
Let $\beta\,:\, {\rm Sym}^d(Y)\,\longrightarrow\, X$ be a nonconstant holomorphic
map. Let
$$
\beta^*\, :\, H^0(X,\, \Omega^1_X)\,\longrightarrow\, H^0({\rm Sym}^d(Y),\,
\Omega^1_{{\rm Sym}^d(Y)})
$$
be the pull-back of $1$--forms defined by $\omega\,\longmapsto\, \beta^*\omega$.
This homomorphism $\beta^*$ is injective, because $\beta$ is surjective. Since
$$
\dim H^0({\rm Sym}^d(Y),\, \Omega^1_{{\rm Sym}^d(Y)})\,=\, \text{genus}(Y)
$$
\cite[p. 322, (4.3)]{Ma}, the injectivity of $\beta^*$ implies that ${\rm genus}(Y)\, \geq\,
{\rm genus}(X)$. Therefore, the given condition ${\rm genus}(X)\, \geq\, {\rm genus}(Y)$
implies that
\begin{itemize}
\item ${\rm genus}(X)\, =\, {\rm genus}(Y)$, and

\item the above homomorphism $\beta^*$ is an isomorphism.
\end{itemize}

If $d\, \geq\, 2$, the wedge product
$$
\wedge^2 H^0({\rm Sym}^d(Y),\, \Omega^1_{{\rm Sym}^d(Y)})\,\longrightarrow\,
H^0({\rm Sym}^d(Y),\, \Omega^2_{{\rm Sym}^d(Y)})
$$
is a nonzero homomorphism \cite[p. 325, (6.3)]{Ma}. On the other hand, the wedge product on
$H^0(X,\, \Omega^1_X)$ is the zero homomorphism because $H^0(X,\, \Omega^2_X)
\,=\, 0$. In other words, $\beta^*$ is not compatible with the wedge product operation
on holomorphic $1$-forms if $d\, \geq\, 2$. So we conclude that $d\,=\,1$.

Since ${\rm genus}(X)\, =\, {\rm genus}(Y)$, from Riemann--Hurwitz formula for
Euler characteristic if follows that $\text{degree}(\beta)\,=\,1$. In other words,
$\beta$ is an isomorphism.
\end{proof}

Let $X'$ be a compact connected Riemann surface of genus at least two.
Fix positive integers $r'\,\geq \, 2$, $d'_p$ and $d'_z$. Let
$$
{\mathcal Q}'\,=\, {\mathcal Q}'_X(r',d'_p.d'_z)
$$
be the corresponding generalized quot scheme (see \eqref{e2}).

\begin{proposition}\label{prop2}
If the two varieties ${\mathcal Q}'$ and ${\mathcal Q}$ (constructed in \eqref{e2})
are isomorphic, then $X$ is isomorphic to $X'$.
\end{proposition}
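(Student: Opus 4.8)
The plan is to reconstruct the curve $X$ intrinsically from the variety $\mathcal{Q}$, so that any isomorphism $\mathcal{Q}'\,\cong\,\mathcal{Q}$ forces $X'\,\cong\,X$. The natural object to recover is the projection $p\,:\,\mathcal{Q}\,\longrightarrow\,\mathrm{Sym}^{d_p}(X)\times\mathrm{Sym}^{d_z}(X)$ from \eqref{e-2}, together with the target symmetric products. Since Theorem \ref{thm1} tells us that $\mathrm{Aut}^0(\mathcal{Q})\,=\,\mathrm{PGL}(r,\mathbb{C})$, and the action of this group is precisely the fiberwise action along $p$ (it is trivial on the base $\mathrm{Sym}^{d_p}(X)\times\mathrm{Sym}^{d_z}(X)$), the morphism $p$ is canonically attached to $\mathcal{Q}$: its fibers are the orbits of the $\mathrm{Aut}^0(\mathcal{Q})$-action, at least over the dense open set $U$ where the fibers are products of projective spaces $(\mathbb{C}\mathbb{P}^{r-1})^{d_p+d_z}$. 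Thus an isomorphism $\Phi\,:\,\mathcal{Q}'\,\xrightarrow{\ \sim\ }\,\mathcal{Q}$ conjugates the two $\mathrm{PGL}$-actions and hence descends to an isomorphism of the base symmetric products
\[
\overline{\Phi}\,:\,\mathrm{Sym}^{d'_p}(X')\times\mathrm{Sym}^{d'_z}(X')\,\xrightarrow{\ \sim\ }\,\mathrm{Sym}^{d_p}(X)\times\mathrm{Sym}^{d_z}(X)\, .
\]

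Next I would extract a copy of $X$ from the target. The idea is to compose $\overline{\Phi}$ with the various projection and inclusion maps and then apply Proposition \ref{prop1}. Concretely, fixing a base point in the second factor $\mathrm{Sym}^{d'_z}(X')$ gives an embedding $\mathrm{Sym}^{d'_p}(X')\,\hookrightarrow\,\mathrm{Sym}^{d'_p}(X')\times\mathrm{Sym}^{d'_z}(X')$; composing with $\overline{\Phi}$ and then with one of the coordinate projections $\mathrm{pr}_X\,:\,\mathrm{Sym}^{d_p}(X)\,\longrightarrow\,X$ (itself built from a point-addition map, or more carefully from the Abel–Jacobi picture) produces a holomorphic map $\mathrm{Sym}^{d'_p}(X')\,\longrightarrow\,X$. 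If such a composite is nonconstant, Proposition \ref{prop1} applies directly: since both curves have genus at least two, it forces $d'_p\,=\,1$ and yields an isomorphism $X'\,\cong\,X$. The genus hypothesis is exactly what Proposition \ref{prop1} needs, and the inequality $\mathrm{genus}(X)\,\geq\,\mathrm{genus}(Y)$ there is symmetric enough to be applied in whichever direction the genera happen to satisfy.

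The main obstacle I anticipate is twofold. First, one must be sure that $p$ (and hence the base) is genuinely recovered from the abstract variety $\mathcal{Q}$ and not merely from its presentation: this requires checking that the $\mathrm{PGL}(r,\mathbb{C})$-orbit structure determines $p$ canonically, for instance by identifying the generic fiber as the unique maximal orbit-closure through a general point, or by a cohomological characterization of the base. Second, and more delicate, is ensuring nonconstancy of the composites fed into Proposition \ref{prop1}: an isomorphism of product symmetric products need not respect the factorization, so $\overline{\Phi}$ could in principle mix the $\mathrm{Sym}^{d_p}(X)$ and $\mathrm{Sym}^{d_z}(X)$ factors, or send one factor to a point under a given projection. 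To handle this I would argue that $\overline{\Phi}$ must preserve the decomposition into irreducible/indecomposable factors of the product up to permutation, and then invoke Proposition \ref{prop1} to see that each symmetric-product factor of genus $\geq 2$ is rigid in the sense that any nonconstant map from it to the underlying curve is an isomorphism with $d\,=\,1$. Piecing these together should pin down $X'\,\cong\,X$, completing the argument.
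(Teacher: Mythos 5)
There are two genuine gaps here, and the second one is fatal. First, your claim that the fibers of $p$ are the orbits of $\mathrm{Aut}^0(\mathcal{Q})\,=\,\mathrm{PGL}(r,\mathbb{C})$ is false: over a general point of $U$ the fiber is $({\mathbb C}{\mathbb P}^{r-1})^{d_p+d_z}$ with the \emph{diagonal} $\mathrm{PGL}(r,\mathbb{C})$-action, whose orbits have dimension at most $r^2-1$, which is much smaller than $(d_p+d_z)(r-1)$ in general; there is not even a dense orbit once $d_p+d_z$ is large. So the intrinsic recovery of $p$, and hence the descent of $\Phi$ to an isomorphism $\overline{\Phi}$ of the symmetric-product bases, is unproved --- you flag this yourself, but the suggested fixes (maximal orbit closures, a cohomological characterization of the base) are neither carried out nor obviously available. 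Second, the map $\mathrm{pr}_X\,:\,\mathrm{Sym}^{d_p}(X)\,\longrightarrow\,X$ you want to compose with does not exist when $d_p\,\geq\,2$: points of a symmetric product are unordered, so there is no coordinate projection, and in fact Proposition \ref{prop1} applied with $Y\,=\,X$ shows that \emph{every} holomorphic map $\mathrm{Sym}^{d_p}(X)\,\longrightarrow\,X$ is constant for $d_p\,\geq\,2$. Hence the composite you feed into Proposition \ref{prop1} is constant whenever $d_p\,\geq\,2$, and the argument yields nothing; relatedly, the intermediate conclusion ``$d'_p\,=\,1$'' could not possibly be forced, since $d'_p$ is an arbitrary positive integer in the hypothesis.

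The paper circumvents both problems by a different mechanism. It uses the composition $\eta\circ p$ only once, to identify it as the Albanese map of $\mathcal{Q}$ (intrinsic, since the fibers of $p$ are products of projective spaces, which admit no nonconstant maps to abelian varieties), and thereby equates the genera of $X$ and $X'$. Then, instead of trying to reconstruct $p$ itself, it fixes a maximal torus $T\,\subset\,\mathrm{Aut}^0(\mathcal{Q})\,=\,\mathrm{PGL}(r,\mathbb{C})$ --- canonical up to conjugacy, hence transported by any isomorphism $\mathcal{Q}'\,\cong\,\mathcal{Q}$ --- and examines the fixed locus $\mathcal{Q}^T$, whose components are products $(\mathrm{Sym}^{a_1}(X)\times\cdots\times\mathrm{Sym}^{a_r}(X))\times(\mathrm{Sym}^{b_1}(X)\times\cdots\times\mathrm{Sym}^{b_r}(X))$ with $\sum_i a_i\,=\,d_p$ and $\sum_i b_i\,=\,d_z$. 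Because the torus splits each degree into $r\,\geq\,2$ parts, some component has an exponent equal to $1$, i.e., it contains $X$ itself as a factor with an honest projection onto it; matching that component with a component of $({\mathcal Q}')^{T'}$ and applying Proposition \ref{prop1} factor by factor then forces some exponent on the $X'$ side to equal $1$ and yields $X\,\cong\,X'$. This passage to the torus-fixed locus is precisely the missing idea your proposal needs to manufacture a copy of the curve, with a projection onto it, inside an intrinsically defined subvariety of $\mathcal{Q}$.
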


\begin{proof}
Assume that ${\mathcal Q}'$ and ${\mathcal Q}$ are isomorphic.
We will show that $X$ and $X'$ are isomorphic.

Let $\eta \,:\, \text{Sym}^{d_p}(X)\times \text{Sym}^{d_z}(X)\,\longrightarrow\,
\text{Pic}^{d_p}(X)\times \text{Pic}^{d_z}(X)$ be the morphism defined by
$$
((x_1\, ,\cdots\, , x_{d_p})\, , (y_1\, ,\cdots\, , y_{d_z}))\,\longmapsto\,
({\mathcal O}_X(x_1+\ldots + x_{d_p})\, ,{\mathcal O}_X(y_1+\ldots + y_{d_z}))\, .
$$
Since the general fiber of the map $p$ in \eqref{e-2} is a product of copies
of projective spaces, the composition
$$
\eta\circ p\, :\, {\mathcal Q}\, \longrightarrow\,
\text{Pic}^{d_p}(X)\times \text{Pic}^{d_z}(X)
$$
is the Albanese map for $\mathcal Q$, as there is no nonconstant holomorphic map from
a projective space to an abelian variety. In particular, the Albanese variety of $\mathcal
Q$ is of dimension $2g\,=\, 2\cdot \text{genus}(X)$. Therefore, comparing the Albanese
varieties of $\mathcal Q$ and ${\mathcal Q}'$ we conclude that $\text{genus}(X)\,=\,
g\,=\, \text{genus}(X')$.

Fix a maximal torus $T$ in $\text{Aut}^0({\mathcal Q})$. In view of Theorem \ref{thm1},
this amounts to choosing a trivialization of ${\mathcal O}_X^{\oplus r}$, with two
trivializations being identified if they differ by multiplication
with a constant nonzero scalar. The fixed--point
locus $${\mathcal Q}^T\, \subset\, \mathcal Q\, ,$$ for the action of $T$
on $\mathcal Q$, is a disjoint union of copies of
$$(\text{Sym}^{a_1}(X)\times \cdots \times \text{Sym}^{a_r}(X))\times
(\text{Sym}^{b_1}(X)\times \cdots \times\text{Sym}^{b_r}(X))$$
with $\sum_{i=1}^r a_i\,=\, d_p$
and $\sum_{i=1}^r b_i\,=\, d_z$.

Take a component $$Z \, =\, (\text{Sym}^{a_1}(X)\times \cdots\times\text{Sym}^{a_r}(X))
\times (\text{Sym}^{b_1}(X)\times \cdots\times\text{Sym}^{b_r}(X))$$ of ${\mathcal Q}^T$
such that at least one of the $2r$ integers $\{a_1\, ,\cdots\, ,a_r\, ,b_1\, ,\cdots\, ,
b_r\}$ is one.

We will first show that $Z$ is not holomorphically isomorphic to $\text{Sym}^{c_1}(Y)\times
\cdots\times\text{Sym}^{c_n}(Y)$, where $Y$ is compact connected Riemann surface of genus $g$
and $c_j\, \geq\, 2$ for all $1\,\leq\, j\,\leq\, n$. To prove this, assume that
$Z$ is isomorphic to $\text{Sym}^{c_1}(Y)\times
\cdots\times\text{Sym}^{c_n}(Y)$, where $Y$ and $c_i$ are as above. Consider the
composition
$$
\text{Sym}^{c_1}(Y)\times\cdots\times\text{Sym}^{c_n}(Y)\, \stackrel{\sim}{\longrightarrow}\,
Z\, \stackrel{q}{\longrightarrow}\, X\, ,
$$
where $q$ is the projection to a factor of $Z$ which is the first symmetric power of $X$
(it is assumed that such a factor exists). Since all $c_j\, \geq \, 2$, from Proposition
\ref{prop1} it follows that there is no nonconstant map from $\text{Sym}^{c_1}(Y)\times
\cdots\times\text{Sym}^{c_n}(Y)$ to $X$. Therefore, we conclude that $Z$ is not
holomorphically isomorphic to $\text{Sym}^{c_1}(Y)\times\cdots\times\text{Sym}^{c_n}(Y)$.

Fix a maximal torus $T'\, \subset\, \text{Aut}({\mathcal Q}')^0$. Since ${\mathcal Q}'$ is
isomorphic to $\mathcal Q$, there is a component
$$
(\text{Sym}^{a'_1}(X')\times \cdots \times \text{Sym}^{a'_r}(X'))\times
(\text{Sym}^{b'_1}(X')\times \cdots \times\text{Sym}^{b'_r}(X'))
$$
of the fixed point locus $({\mathcal Q}')^{T'}\, \subset\, {\mathcal Q}'$ which is
isomorphic to $Z$. Now from Proposition \ref{prop1} it follows that
\begin{itemize}
\item at least one of the $2r$ integers $\{a'_1\, ,\cdots\, ,a'_r\, ,b'_1\, ,\cdots\, ,
b'_r\}$ is one, and

\item $X$ is isomorphic to $X'$.
\end{itemize}
This completes the proof.
\end{proof}

\section*{Acknowledgements}

The first named author thanks Facultad de Matem\'aticas, PUC Chile, for hospitality.
He also acknowledges the support of a J. C. Bose Fellowship.  
The second named author was partially supported by FONDECYT grant 1150404 
during the preparation of this article.


\end{document}